\newtheorem{theorem}{Theorem}[section]
\newtheorem{lemma}[theorem]{Lemma}
\theoremstyle{definition}
\newtheorem{definition}[theorem]{Definition}
\theoremstyle{remark}
\newtheorem{remark}[theorem]{Remark}
\numberwithin{equation}{section}
\DeclareMathOperator{\ind}{ind}
\renewcommand{\epsilon}{\varepsilon}
\renewcommand{\phi}{\varphi}
\renewcommand{\kappa}{\varkappa}
\begin{document}

\title{Covering dimension using toric varieties}

\author{Roman~Karasev}
\thanks{Supported by the Dynasty Foundation, the President's of Russian Federation grant MD-352.2012.1, and the Russian government project 11.G34.31.0053.}
\email{r\_n\_karasev@mail.ru}
\address{Roman Karasev, Dept. of Mathematics, Moscow Institute of Physics and Technology, Institutskiy per. 9, Dolgoprudny, Russia 141700}
\address{Roman Karasev, Institute for Information Transmission Problems RAS, Bolshoy Karetny per. 19, Moscow, Russia 127994}
\address{Roman Karasev, Laboratory of Discrete and Computational Geometry, Yaroslavl' State University, Sovetskaya st. 14, Yaroslavl', Russia 150000}

\subjclass[2010]{14M25, 52B20,  54F45, 55M10, 55M30}
\keywords{KKM theorem, Lebesgue's theorem, covering dimension}

\begin{abstract}
In this paper we deduce the Lebesgue and the Knaster--Kuratowski--Mazurkiewicz theorems on the covering dimension, as well as their certain generalizations, from some simple facts of toric geometry. This provides a new point of view on this circle of results.
\end{abstract}

\maketitle

\section{Introduction}

In the theory of covering dimension there are lemmas showing that the Euclidean space $\mathbb R^n$ has dimension at least $n$. One result is attributed to Lebesgue: If a unit cube $[0,1]^n$ is covered by compact sets so that no point is covered by more than $n$ of them, then one of the sets must intersect two opposite facets of the cube. This result shows that a covering of the cube (or the whole $\mathbb R^n$) with multiplicity at most $n$ cannot consist of arbitrarily small sets, and actually this conclusion is made quantitative, see also the review~\cite{guth2012} about this and other more sophisticated quantitative topological facts.

Another result is the Knaster--Kuratowski--Mazurkiewicz theorem~\cite{kkm1929} (usually referred to as the KKM theorem): If the simplex $S\subset \mathbb R^n$ is covered by a family of closed sets so that no point is covered by more than $n$ sets, then one of the sets intersects all the facets of $S$. This result also shows that a multiplicity $n$ covering cannot have arbitrarily small sets.

Different proofs of these lemmas are known. Probably, the most popular way is to subdivide the cube or the simplex into a triangulation and establish some combinatorial statement, like Sperner's lemma on coloring the vertices of a triangulation.

In this note we exploit a different approach and apply some elementary techniques of toric geometry (see~\cite{atiyah1983,ful1993} to get acquainted with the subject) to prove the KKM and the Lebesgue theorem and some their moderate generalizations.

\section{A KKM type theorem}

We start from proving the following strengthened KKM theorem, suggested by D\"om\"ot\"or P\'alv\"olgyi~\cite{domp2012}. This time we use the Borsuk--Ulam type technique developed in~\cite{kar2012}, which can be interpreted as a ``real toric'' approach. Later we will pass to the standard ``complex toric'' proof.

\begin{theorem}
\label{kkm}
Let $X_i\subset \Delta^n$, for $i=1, \ldots, N$, be closed subsets of the $n$-simplex such that every $X_i$ does not intersect some facet of $\Delta^n$. If no $k+1$ sets of $\{X_i\}$ have a common point then there exists a component of the complement $\Delta^n\setminus \bigcup_{i=1}^N X_i$ that intersects every $k$-face of $\Delta^n$.
\end{theorem}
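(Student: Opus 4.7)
The plan is first to reduce to the case of exactly $n+1$ sets. For each index $i$ choose a facet $F_{j(i)}$ missed by $X_i$ and set $Y_j := \bigcup_{i \colon j(i)=j} X_i$. Then the $Y_0, \ldots, Y_n$ are closed, $Y_j \cap F_j = \emptyset$, $\bigcup_j Y_j = \bigcup_i X_i$, and no $k+1$ of the $Y_j$ share a common point (the multiplicity at each point is at most the original multiplicity). The claim reduces to: given closed $Y_j \subset \Delta^n$ with $Y_j \cap F_j = \emptyset$ and $\bigcap_{j \in J} Y_j = \emptyset$ whenever $|J|=k+1$, find a component of $V := \Delta^n \setminus \bigcup_j Y_j$ meeting every $k$-face.

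Next I would pass to the real toric double cover $\mu \colon S^n \to \Delta^n$ given by $\mu(x) = (x_0^2, \ldots, x_n^2)$, the quotient by $G = (\mathbb{Z}/2)^{n+1}$ acting on $S^n$ by coordinate sign flips. Each lift $\tilde Y_j := \mu^{-1}(Y_j)$ is $G$-invariant and, since $Y_j$ misses $F_j$, avoids the coordinate hyperplane $\{x_j=0\}$; consequently $\operatorname{sgn}(x_j)$ is a well-defined continuous $\{\pm 1\}$-valued function on $\tilde Y_j$, flipped by the $j$-th sign generator of $G$ and fixed by the others. This sign decoration is the real-toric substitute for a Sperner labelling.

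Now assume for contradiction that every component of $V$ misses some $k$-face. The aim is to turn this combinatorial obstruction into the non-existence of an equivariant map. Concretely, by combining the sign assignments on the $\tilde Y_j$ with the per-component choice of a missed $k$-face on $V$ (lifted $G$-equivariantly to $\tilde V = \mu^{-1}(V)$), I would construct a continuous $G$-equivariant map $\Phi \colon S^n \to W \setminus \{0\}$, where $W$ is a suitable $G$-representation whose dimension is governed by $k$ (reflecting that at each point only at most $k$ of the $\tilde Y_j$ are active). A Borsuk--Ulam-type theorem for $(\mathbb{Z}/2)^{n+1}$-actions on $S^n$, in the spirit of~\cite{kar2012}, then rules out such $\Phi$ once $\dim W \leq n$, producing the desired contradiction.

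The main difficulty is the actual construction of $\Phi$. One must glue the local sign data on the $\tilde Y_j$ into a single continuous equivariant map --- for instance via a partition of unity with weights given by distances to the various $\tilde Y_j$ --- and then match the global ``missed $k$-face'' choice on each component of $\tilde V$ with the correct direction in $W$. The crux is identifying the right representation $W$ so that the image genuinely omits $0$ and the equivariant Euler class (or ideal-valued index) obstruction is non-trivial; both the dimension bound on $W$ and the non-vanishing of the obstruction should ultimately come from the $k$-multiplicity hypothesis combined with the standard Borsuk--Ulam machinery for elementary abelian $2$-groups.
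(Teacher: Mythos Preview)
Your reduction to $n+1$ sets $Y_0,\ldots,Y_n$ is valid, and the use of the squaring map $S^n\to\Delta^n$ matches the paper's ``real toric'' setup. However, the proposal stops short of a proof: you explicitly leave open the construction of the equivariant map $\Phi$, the identification of the target representation $W$, and the verification that the relevant obstruction is nontrivial. These are not details to be filled in routinely --- they are the entire content of the argument. In particular, encoding ``each component of $V$ misses some $k$-face'' as a continuous equivariant map into a representation of dimension $\le n$ is not straightforward, and you give no candidate for $W$. There is also a structural obstacle: the $(\mathbb Z/2)^{n+1}$-action on $S^n$ by coordinate sign flips is \emph{not free} (any point with $x_j=0$ is fixed by the $j$-th generator), so the usual Borsuk--Ulam/Euler-class machinery for free actions does not apply directly, and you would need a Fadell--Husseini ideal-valued index or similar, with the attendant computations.

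The paper's proof avoids all of this by working only with the antipodal $\mathbb Z_2$-action, which \emph{is} free on $S^n$, and by not reducing to $n+1$ sets. For each of the original $N$ sets $X_i$ one builds an explicit odd function $f_i:S^n\to[-1,1]$ equal to $\pm 1$ exactly on $\pi^{-1}(X_i)$; the product map $f=(f_1,\ldots,f_N):S^n\to[-1,1]^N$ sends $Y=\bigcup_i\pi^{-1}(X_i)$ into $\partial Q^N$, and the multiplicity-$k$ hypothesis forces $f(Y)$ to miss the $(N-1-k)$-skeleton of $Q^N$, hence to retract equivariantly onto a $(k-1)$-complex. Standard $\mathbb Z_2$-index monotonicity and subadditivity then give $\ind(S^n\setminus Y)\ge n-k$, so some component $Z$ has $\ind Z\ge n-k$; since each $k$-face pulls back to the zero set of $n-k$ odd coordinate functions, $Z$ must meet it. This is a direct argument rather than a contradiction, requires no choice of representation, and uses only the classical $\mathbb Z_2$-index from Matou\v{s}ek's book.
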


\begin{remark}
For $k = n$, this theorem asserts that either some $n+1$ of the sets intersect, or they do not cover $\Delta^n$. This is the KKM theorem.
\end{remark}

\begin{proof}[Proof using the ``real toric'' approach]

Consider the map from the sphere $\pi : S^n \to \Delta^n$ defined by $\pi(x_0, \ldots, x_n) = (x_0^2, \ldots, x_n^2)$.

By the assumption any set $Y_i = \pi^{-1}(X_i)$ does not intersect the some hyperplane ${x_j = 0}$. Hence it consists of two parts, with positive and negative $x_j$ respectively, interchanged by the antipodal map $x\mapsto -x$. Therefore we can find an odd continuous function $f_i : S^n\to \mathbb R$ such that $f_i(Y_i) = \{+1, -1\}$ and $f_i(S^n\setminus Y_i) = (-1, 1)$.

Now consider the direct sum $f = f_1\oplus \dots \oplus f_N$, this is a map from $S^n$ to the cube $Q^N = [-1, 1]^N$. Put $Y = \bigcup_{i=1}^N Y_i$ and notice that $f(Y) \subseteq \partial Q^N$. From the condition that no $k+1$ of the sets $\{Y_i\}$ have nonempty intersection it follows that the image $f(Y)$ misses the $(N-1-k)$-dimensional skeleton $Q^N_{(N-1-k)}$ of the cube $Q^N$. Therefore the image $f(Y)$ can be equivariantly (with respect to the antipodal action of $\mathbb Z_2$) deformed to a $(k-1)$-dimensional subset of $\partial Q^N$, this subset can be described as the skeleton of the Poincar\'e dual partition of $\partial Q^N$.

From the standard properties of the $\mathbb Z_2$-index in terms of~\cite{mat2003} we observe the following: By the dimension bound we have $\ind f(Y) \le k-1$. Hence by the monotonicity of the index $\ind Y \le k-1$. By the subadditivity of the index it follows that $\ind S^n\setminus Y \ge n-k$. Moreover, there must exist a connected component $Z$ of the set $S^n\setminus Y$ with the same index $\ind Z \ge n-k$.

Now consider a $k$-face $F$ of $\Delta^n$, its preimage $\pi^{-1}(F)$ can be assumed to be defined by $x_{k+1}= x_{k+2} = \dots x_n = 0$, without loss of generality. Since $\ind Z\ge n-k$, the odd map $g : Z \to \mathbb R^{n-k}$ defined by the coordinates $(x_i)_{k+1}^n$ must meet the zero. Hence $Z\cap \pi^{-1}(F) \neq \emptyset$ and therefore $\pi(Z)\cap F\neq \emptyset$ for any $k$-face $F$. The set $\pi(Z)$ can be chosen to be the set required in the theorem.
\end{proof}

\section{The toric approach}

Let us prove Theorem~\ref{kkm} again using the approach of complex toric geometry, that is the genuine toric approach. In fact, in the previous section the map $S^n\to \Delta^n$ may be considered as the moment map of a ``real toric variety''; and now we are going to invoke the classical notion of a complex toric variety and its moment map.

The toric geometry studies symplectic manifolds $M^{2n}$ (or $n$-dimensional algebraic varieties), possibly with singularities of codimension at least $2$, as it usually happens with algebraic varieties. The symplectic (or K\"{a}hler) form $\omega\in \Omega^2(M)$ has the property that $\omega^n$ never vanishes and $\int_M \omega^n > 0$, so the cohomology class $[\omega]$ is defined and nontrivial in $H^2(M;\mathbb R)$ and $[\omega]^n\neq 0$. This is actually all we are going to use.

Let us make a definition:

\begin{definition}
A subset $Y\subseteq M$ is called \emph{inessential} if the class $[\omega]$ vanishes on $Y$, that is its image under the map $\iota^*$ is zero, where $\iota : Y\to M$ is the embedding.
\end{definition}

The main lemma that we use is the following:

\begin{lemma}
\label{covering-dim}
If $\{Y_i\}_{i=1}^N$ is a family of inessential open subsets of $M$ with covering multiplicity at most $k$ (no point of $M$ belongs to more than $k$ of them) then $[\omega]^k$ vanishes on their union $\bigcup_{i=1}^N Y_i$.
\end{lemma}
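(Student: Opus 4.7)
The plan is to use a partition of unity subordinate to $\{Y_i\}$ to produce an explicit closed $2$-form on $U := \bigcup_{i=1}^N Y_i$ that is cohomologous to $\omega$ and whose $k$-th wedge power vanishes identically on $U$.

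First I would represent $[\omega]$ by a smooth closed $2$-form on the smooth locus of $M$ (which carries the same real cohomology as $M$ since the singular locus has real codimension $\ge 4$). For each $i$, the vanishing $[\omega]|_{Y_i} = 0$ supplies a primitive $\alpha_i$ on $Y_i$ with $d\alpha_i = \omega$. Next I would fix a smooth partition of unity $\{\phi_i\}$ on $U$ with $\supp \phi_i \subset Y_i$ and $\sum_i \phi_i = 1$ on $U$, set $\alpha := \sum_i \phi_i \alpha_i$ (each $\phi_i\alpha_i$ extended by zero to $U$), and compute
\[
\eta := \omega - d\alpha = -\sum_i d\phi_i \wedge \alpha_i
\]
on $U$. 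The form $\eta$ is closed and cohomologous to $\omega$, so $[\omega]^k = [\eta^k]$ in $H^{2k}(U;\mathbb{R})$, and it suffices to show $\eta^k \equiv 0$.

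The heart of the proof is the pointwise vanishing of $\eta^k$. At each $p \in U$, let $S(p) = \{i : p \in Y_i\}$, so $|S(p)| \le k$ by hypothesis. Only the $\phi_i$ with $i \in S(p)$ are nonzero near $p$, and $\sum_{i\in S(p)} d\phi_i = 0$ there; fixing any $j \in S(p)$ and subtracting the identically vanishing form $\bigl(\sum_i d\phi_i\bigr) \wedge \alpha_j$ rewrites
\[
\eta = -\sum_{i \in S(p) \setminus \{j\}} d\phi_i \wedge (\alpha_i - \alpha_j)
\]
in a neighborhood of $p$, a sum of at most $k-1$ two-forms, each of which squares to zero because $d\phi_i \wedge d\phi_i = 0$. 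Expanding $\eta^k$ would then require a selection of $k$ distinct such terms, whereas only $|S(p)|-1 \le k-1$ are available, so the expansion is empty and $\eta^k(p) = 0$.

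The main technical obstacle is the local nature of the rewriting, since the differences $\alpha_i - \alpha_j$ only live on the overlap $Y_i \cap Y_j$; this is harmless but must be justified on a small enough neighborhood of each $p$. Conceptually the argument is the Lusternik--Schnirelmann / \v{C}ech--de~Rham cup-length principle: in the \v{C}ech-to-de~Rham spectral sequence of the cover $\{Y_i\}$, the multiplicity hypothesis makes $E_1^{p,\ast}=0$ for $p \ge k$, the class $[\omega]$ has filtration $\ge 1$ (as it vanishes on each $Y_i$), and multiplicativity of the filtration places $[\omega]^k$ in filtration $\ge k$, forcing $[\omega]^k|_U = 0$.
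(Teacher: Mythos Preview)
Your argument is correct and genuinely different from the paper's. The paper first invokes Palais's coloring lemma to refine the cover into $k$ color classes $\mathcal Y_1,\dots,\mathcal Y_k$ of pairwise disjoint sets; each union $U_i=\bigcup\mathcal Y_i$ is then inessential, and the vanishing of $[\omega]^k$ on $U_1\cup\dots\cup U_k$ follows from the classical Lusternik--Schnirelmann cup-length argument (or, as the paper remarks, from expanding $(\omega-d\alpha_1)\wedge\dots\wedge(\omega-d\alpha_k)$ with one primitive per color). You bypass the coloring step entirely: a single partition of unity subordinate to the original cover produces $\eta=\omega-d\alpha=-\sum_i d\phi_i\wedge\alpha_i$, and the multiplicity bound enters pointwise via the rewriting $\eta=-\sum_{i\in S(p)\setminus\{j\}} d\phi_i\wedge(\alpha_i-\alpha_j)$ as a sum of at most $k-1$ two-forms with vanishing squares, forcing $\eta^k=0$ by pigeonhole. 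The paper's route is conceptually cleaner---it reduces to the minimal case of $k$ inessential sets---but at the cost of an external combinatorial lemma; your route is self-contained and makes the de~Rham mechanism completely explicit, and your \v Cech--de~Rham filtration remark captures exactly why both work.
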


The proof of Lemma~\ref{covering-dim} is a combination of two steps. The first step is passing to a covering that is colored in $k$ colors in a compatible way, we borrow it from~\cite[Lemma~2.4]{palais1966}, although a weaker form of it can be shown by using the nerve theorem and passing to the barycentric subdivision, which is naturally regularly colored. Another, informal, point of view on this step is to assign the color $i$ to the set of those points that are covered exactly $i$ times by the sets of $\mathcal Y$, and then resolve the issue with not openness of the obtained covering. So this is the lemma:

\begin{lemma}[Palais, 1966]
\label{coloring}
An open covering $\mathcal Y$ of a paracompact space with multiplicity at most $k$ can be refined to a $k$-colorable open covering, that is a covering consisting of $k$ subfamilies $\mathcal Y_1, \ldots, \mathcal Y_k$ (colors) such that for any sets $Y_1, Y_2$ in the same color class $\mathcal Y_i$ the intersection $Y_1\cap Y_2$ is empty.
\end{lemma}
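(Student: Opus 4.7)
The plan is to construct the required refinement from a partition of unity, letting the color of each new set be the size of a naturally arising support pattern.

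First I would invoke paracompactness to replace $\mathcal Y$ by a locally finite open refinement of the same multiplicity, and take a subordinate partition of unity $\{\phi_i\}_{i \in I}$ with $\supp \phi_i \subseteq Y_i$ for each member $Y_i$. For any point $x$, set $\sigma(x) = \{i : \phi_i(x) > 0\}$; this set is nonempty and has at most $k$ elements. Then for each nonempty finite $\sigma \subseteq I$ I would define
\[
U_\sigma = \bigl\{x : \phi_i(x) > \phi_j(x) \text{ for all } i \in \sigma,\ j \notin \sigma\bigr\}.
\]
Local finiteness makes each $U_\sigma$ open, the containment $U_\sigma \subseteq \bigcap_{i \in \sigma} Y_i$ shows that $\{U_\sigma\}$ refines $\mathcal Y$, and since every $x$ lies in $U_{\sigma(x)}$ these sets cover the space. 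Note also that $U_\sigma \neq \emptyset$ forces $\sigma \subseteq \sigma(x)$ for some $x$, hence automatically $|\sigma| \le k$.

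The crux is the following chain property: if $x \in U_\sigma \cap U_{\sigma'}$, then $\sigma$ and $\sigma'$ are comparable under inclusion. Otherwise one could pick $i \in \sigma \setminus \sigma'$ and $j \in \sigma' \setminus \sigma$ and derive the contradictory inequalities $\phi_i(x) > \phi_j(x)$ and $\phi_j(x) > \phi_i(x)$. It follows that the sets $U_\sigma$ containing any fixed $x$ have pairwise distinct cardinalities, all lying in $\{1,\dots,k\}$.

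Finally I would color $U_\sigma$ by the integer $|\sigma|$, producing the $k$ classes $\mathcal Y_m = \{U_\sigma : |\sigma| = m\}$ for $m=1,\dots,k$. Two distinct sets of the same color have equal cardinality, hence by the chain property they must be disjoint, which is exactly the required compatibility. The only analytic input is paracompactness, used once to obtain the partition of unity; the main potential obstacle is the combinatorics of the chain property, where one must set up the strict inequalities defining $U_\sigma$ carefully enough that incomparable $\sigma, \sigma'$ are forced to be disjoint.
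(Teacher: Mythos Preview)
Your argument is correct. Note that the paper does not actually prove this lemma: it simply cites Palais and remarks that ``a weaker form of it can be shown by using the nerve theorem and passing to the barycentric subdivision, which is naturally regularly colored.'' Your construction is an explicit, self-contained realization of precisely that barycentric idea. The partition of unity $(\phi_i)$ is the canonical map into the nerve, your sets $U_\sigma$ are exactly the preimages of the open stars of the barycenters of the faces $\sigma$ in the barycentric subdivision, and coloring by $|\sigma|$ is the standard dimension coloring of a barycentric subdivision. So you have carried out directly what the paper only alludes to, without invoking the nerve theorem as a black box; this makes the argument more elementary and fully self-contained.

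One tiny cosmetic point: the claim that $U_\sigma \neq \emptyset$ forces $\sigma \subseteq \sigma(x)$, and likewise the inclusion $U_\sigma \subseteq \bigcap_{i \in \sigma} Y_i$, uses the existence of some $j \notin \sigma$ and hence breaks down when $\sigma = I$. This only matters when the index set has $|I| \le k$, in which case the lemma is trivial; alternatively just adjoin a dummy index with $\phi \equiv 0$.
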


The next step will be a standard argument going back to Lusternik and Schnirelmann~\cite{ls1934}.

\begin{proof}[Proof of Lemma~\ref{covering-dim}]
By Lemma~\ref{coloring} we assume that $\mathcal Y = \bigcup_{i=1}^k \mathcal Y_i$ with every subfamily $\mathcal Y_i$ consisting of pairwise disjoint sets. Under taking the refinement the sets remain inessential, since this property is preserved under taking subsets.

The union $U_i = \bigcup \mathcal Y_i$ is a disjoint union and the open set $U_i$ is inessential, that is $[\omega]|_{U_i} = 0$ by definition. By the property of the cohomology multiplication $[\omega]^k|_U = 0$ for $U = \bigcup_{i=1}^k U_i = \bigcup \mathcal Y$, which contradicts the assumption.
\end{proof}

\begin{remark}
We could finish this proof without appealing to cohomology multiplication as follows: On every $U_i$ we find a differential form $\alpha_i$ such that $d\alpha_i = \omega$ on $U_i$. After taking smaller subsets $V_i\subseteq U_i$, still covering $U$, we may assume that the equality $d\alpha_i = \omega$ still holds on $V_i$ and $\alpha_i$ is extended to the whole $U = \bigcup \mathcal Y$. Now we observe that the product 
$$
(\omega - d\alpha_1)\wedge \dots \wedge (\omega - d\alpha_k)
$$
vanishes on the whole $U$, and after expanding we observe that $\omega^k = d\beta$ on $U$ for some $\beta\in \Omega^{2k-1}(U)$, which is nonsense.
\end{remark}

\begin{proof}[Toric proof of Theorem~\ref{kkm}]
Consider the moment map $\pi : \mathbb CP^n \to \Delta^n$, given in coordinates as 
$$
y_i = \frac{|z_i|^2}{|z_0|^2 + \dots + |z_n|^2}.
$$

Again, any set $Y_i = \pi^{-1}(X_i)$ does not intersect some hyperplane ${z_j = 0}$. These sets are closed, but by the standard technique we may pass to neighborhoods without increasing the covering multiplicity. Since there is a representative of $[\omega]$ concentrated near any given hyperplane in $\mathbb CP^n$ then $[\omega]$ vanishes on any $Y_i$ and those sets are inessential. 

By Lemma~\ref{covering-dim}, $[\omega]^k$ vanishes on $Y = \bigcup_i Y_i$. By the property of the cohomology multiplication, $[\omega]^{n-k}$ does not vanish on $\mathbb CP^n\setminus Y$ and there must exist a connected component $Z$ of the set $\mathbb CP^n\setminus Y$ with nonvanishing $[\omega]^{n-k}$.

As in the first proof, considering a $k$-face $F$ of $\Delta^n$ and its preimage $\pi^{-1}(F)$ we may assume that $\pi^{-1}(F) = \{z_{k+1}= x_{k+2} = \dots z_n = 0\}$. The class $[\omega]^{n-k}$ is Poincar\'e dual to the projective $k$-subspace $\pi^{-1}(F)$ and therefore has a representative concentrated in an arbitrarily small neighborhood of $\pi^{-1}(F)$. Hence the closed set $Z$ must intersect $\pi^{-1}(F)$ to have the class $[\omega]^{n-k}$ nontrivial; hence the set $\pi(Z)$ is what we need in the theorem.
\end{proof}

\section{Lebesgues's theorem}

We now see that the Lebesgue theorem has a very simple proof in the toric approach:

\begin{theorem}
\label{lebesgue}
Let the unit cube $Q^n$ be covered by a family of closed sets $X_i$ with covering multiplicity at most $n$. Then some $X_i$ touches two opposite facets of $Q^n$.
\end{theorem}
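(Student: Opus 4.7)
The plan is to realize the cube $Q^n$ as the moment polytope of the toric manifold $(\mathbb CP^1)^n$, whose product K\"ahler form decomposes as $\omega = \omega_1 + \dots + \omega_n$, where $\omega_j$ is the pullback of the Fubini--Study form by the projection $\pi_j$ onto the $j$-th factor. The moment map sends $(z_1, \dots, z_n)$ to $(y_1, \dots, y_n)$ with $y_j = |z_j|^2/(1 + |z_j|^2)$, so the opposite facets $\{y_j = 0\}$ and $\{y_j = 1\}$ pull back to the two poles $\{z_j = 0\}$ and $\{z_j = \infty\}$ of the $j$-th $\mathbb CP^1$-factor, both Poincar\'e dual to $[\omega_j]$.

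Arguing by contradiction, I assume that no $X_i$ touches two opposite facets; then for every $i$ and every $j$ the set $X_i$ misses $\{y_j = 0\}$ or $\{y_j = 1\}$, and so the preimage $Y_i = \pi^{-1}(X_i)$ projects, for every $j$, into a contractible subset of the $j$-th factor. Hence each of the $n$ classes $[\omega_j]$ restricts trivially to $Y_i$. After the standard passage to slightly larger open neighborhoods of the $X_i$ of multiplicity still at most $n$, I obtain an open cover $\{Y_i'\}$ of $(\mathbb CP^1)^n$ of multiplicity at most $n$ on which every $[\omega_j]$ is individually inessential.

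The core step---and where the argument most departs from the $\mathbb CP^n$ proof of Theorem~\ref{kkm}---is to combine these $n$ individual vanishings into a global exactness of the top class $[\omega_1 \wedge \dots \wedge \omega_n]$, contradicting the non-triviality of the fundamental class of $(\mathbb CP^1)^n$. Following the Lusternik--Schnirelmann argument in the Remark after Lemma~\ref{covering-dim}, I apply Lemma~\ref{coloring} to refine the cover into $n$ color classes $\mathcal Y_1, \dots, \mathcal Y_n$ of pairwise disjoint sets; each union $U_c = \bigcup \mathcal Y_c$ then inherits the vanishing of every $[\omega_j]$, so in particular $\omega_c = d\alpha_c$ on $U_c$ for a primitive $\alpha_c$ that I extend to all of $(\mathbb CP^1)^n$. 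The product $(\omega_1 - d\alpha_1) \wedge \dots \wedge (\omega_n - d\alpha_n)$ vanishes pointwise on $U_1 \cup \dots \cup U_n = (\mathbb CP^1)^n$---at each point, the factor indexed by its color is zero---and expanding yields $\omega_1 \wedge \dots \wedge \omega_n = d\beta$ globally, the desired contradiction. The main obstacle is the pairing of color classes with cohomology classes: this works precisely because the multiplicity bound $n$ matches the number of $\mathbb CP^1$-factors.
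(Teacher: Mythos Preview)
Your argument is correct and uses the same toric setup as the paper---the moment map $\pi : (\mathbb CP^1)^n \to Q^n$ and the observation that a set missing one facet from each opposite pair pulls back to something on which the relevant cohomology vanishes. The organization differs, however. The paper works with a \emph{single} class $[\omega]$, namely the K\"ahler class of the Segre embedding $(\mathbb CP^1)^n \hookrightarrow \mathbb CP^{2^n-1}$, whose divisor $\sum_j (\pi^{-1}(F_j^-)+\pi^{-1}(F_j^+))$ is linearly equivalent to $2\sum_j \pi^{-1}(F_j^-)$; thus each $Y_i$ misses a representative of this divisor, is inessential for $[\omega]$, and Lemma~\ref{covering-dim} finishes immediately.

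Your route---carrying the $n$ factor classes $[\omega_j]$ separately, coloring into $n$ classes, and pairing color $c$ with $\omega_c$ in the Lusternik--Schnirelmann product---also works, but the pairing is artificial here: since \emph{every} $[\omega_j]$ vanishes on \emph{every} $Y_i$, any bijection between colors and factors would do, and in fact you could simply add them up to get $[\omega]=\sum_j[\omega_j]$ vanishing on each $Y_i$ and invoke Lemma~\ref{covering-dim} directly. Your final remark, that the argument hinges on matching the multiplicity $n$ to the number of factors, slightly overstates the delicacy of the step. That said, your version makes the mechanism of Theorem~\ref{lebesgue-axes} visible already here, which is a nice structural observation.
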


\begin{proof}
Consider the moment map $\pi : M\to Q^n$, where $M = (\mathbb CP^1)^n$ and the corresponding sets $Y_i = \pi^{-1} (X)$. 

Let $F_j^+$ and $F_j^-$ by the facets of $Q^n$ defined by $x_j = 1$ and $x_j=0$ respectively. If a given $Y_i$ does not touch both $F_j^+$ and $F_j^-$ at the same time, we may assume, without loss of generality for the following argument, that it does not touch $F_j^-$. 

It is known that the hyperplane divisor (Poincar\'e dual to the K\"ahler form) of the toric embedding of $M$ into $\mathbb CP^{2^n-1}$ is $\sum_{j=1}^n \left( \pi^{-1}(F_j^-) + \pi^{-1}(F_j^+) \right)$. Moreover, the latter divisor is equivalent to $D = \sum_{j=1}^n 2 \pi^{-1}(F_j^-)$; this follows from the general description of the divisor classes on toric varieties and is completely elementary for $M = (\mathbb CP^1)^n$. So we conclude that every $Y_i$ is inessential and the covering multiplicity at most $n$ is impossible by Lemma~\ref{covering-dim}.
\end{proof}

Similar to the generalization of the KKM theorem in Theorem~\ref{kkm}, it is possible to generalize the Lebesgue theorem as follows:

\begin{theorem}
\label{lebesgue-complement}
Let $\{X_i\}$ be a family of subsets of the unit cube $Q^n$ such that none of $X_i$ touches a pair of opposite facets of $Q^n$. If the covering multiplicity of $\{X_i\}$ is at most $k$ then there exists a connected component $Z$ of the complement $Q^n\setminus\bigcup_i X_i$ and a $k$-dimensional coordinate subspace $L\subseteq \mathbb R^n$ such that $Z$ intersects every $k$-face of $Q^n$ parallel to $L$.
\end{theorem}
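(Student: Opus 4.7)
The plan is to mimic the toric proof of Theorem~\ref{lebesgue} and then extract extra information from the cohomological obstruction when $k<n$. As before, consider the moment map $\pi : M \to Q^n$ with $M = (\mathbb{CP}^1)^n$, and set $Y_i = \pi^{-1}(X_i)$. By the divisor computation in the proof of Theorem~\ref{lebesgue}, each $Y_i$ is inessential: since $X_i$ misses one of $F_j^+,F_j^-$ for every $j$, the preimage $Y_i$ is disjoint from a divisor $\sum_{j=1}^n 2\pi^{-1}(F_j^{\epsilon_{i,j}})$ representing $[\omega]$. After thickening the $X_i$ to open sets of the same multiplicity, Lemma~\ref{covering-dim} gives $[\omega]^k|_Y=0$, where $Y=\bigcup_i Y_i$.

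Next I would run the Lusternik--Schnirelmann argument used in the toric proof of Theorem~\ref{kkm}: since $[\omega]^n\neq 0$, the vanishing of $[\omega]^k$ on $Y$ forces $[\omega]^{n-k}$ to be nonzero on some connected component $Z$ of $M\setminus Y$. The new input is the product structure of $H^*(M;\mathbb{R})$. Writing $\omega=\omega_1+\dots+\omega_n$ with $\omega_j$ the pullback of the Fubini--Study form from the $j$-th factor and using $\omega_j^2=0$, one has
\[
[\omega]^{n-k}=(n-k)!\sum_{|S|=n-k}\alpha_S,\qquad \alpha_S:=\prod_{j\in S}[\omega_j],
\]
so at least one summand must satisfy $\alpha_S|_Z\neq 0$ in $H^{2(n-k)}(Z;\mathbb{R})$. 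I fix such an $S$ and take $L$ to be the coordinate $k$-subspace spanned by $\{e_j:j\notin S\}$.

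Every $k$-face $F$ of $Q^n$ parallel to $L$ has the form $\bigcap_{j\in S}F_j^{\epsilon_j}$ for some choice of signs, and $\pi^{-1}(F)$ is a copy of $(\mathbb{CP}^1)^k$ inside $M$. Because each $[\omega_j]$ admits a Poincar\'e dual representative supported in an arbitrarily small neighborhood of either $\pi^{-1}(F_j^+)$ or $\pi^{-1}(F_j^-)$, the product $\alpha_S$ has a representative concentrated in any prescribed neighborhood of $\pi^{-1}(F)$. If $Z$ missed $\pi^{-1}(F)$, normality would let such a representative be made disjoint from $Z$ and hence vanish on $Z$, contradicting $\alpha_S|_Z\neq 0$. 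Thus the connected set $\pi(Z)\subseteq Q^n\setminus\bigcup_i X_i$ meets every $k$-face of $Q^n$ parallel to $L$, and the component of the complement containing $\pi(Z)$ is the desired one.

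The main obstacle, to my mind, is the conceptual step behind the choice of $S$: a single class $\alpha_S$ must simultaneously detect the whole family of parallel $k$-faces. This is where the product structure of $M=(\mathbb{CP}^1)^n$ is really used, because each factor class $[\omega_j]$ is Poincar\'e dual to either of the two opposite facet preimages, which makes $\alpha_S$ insensitive to the signs $\epsilon_j$. Once this is recognized, the cup-length/Lusternik--Schnirelmann step and the thickening of the closed $X_i$ to open neighborhoods of the same multiplicity are routine and directly parallel the earlier toric proofs.
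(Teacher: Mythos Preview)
Your proof is correct and follows essentially the same approach as the paper: lift to $M=(\mathbb{CP}^1)^n$, use Lemma~\ref{covering-dim} to get $[\omega]^k|_Y=0$, find a component $Z$ with $[\omega]^{n-k}|_Z\neq 0$, and then exploit the explicit cohomology ring $H^*(M)\cong\mathbb Z[c_1,\dots,c_n]/(c_j^2)$ together with the fact that each $[\omega_j]$ is Poincar\'e dual to either of $\pi^{-1}(F_j^\pm)$. The only cosmetic difference is in the final step: the paper packages the monomials into the subvariety $V_k=\sum_I \pi^{-1}(F_I)$ and argues by varying the choices of $F_I$, whereas you single out one monomial $\alpha_S$ with $\alpha_S|_Z\neq 0$ directly; these two formulations are equivalent.
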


\begin{proof}
Again, consider the moment map $\pi : M = (\mathbb CP^1)^n \to Q^n$ and the sets $Y_i = \pi^{-1}(X_i)$. By the assumption all of them are inessential. Hence, the cohomology class of $[\omega]^{n-k}$ is nonzero on the complement $M\setminus\bigcup_i Y_i$ and therefore is nonzero on a certain connected component $Z$ of this closed set.

Now we observe that $[\omega]^{n-k}$ is Poincar\'e dual to a positive multiple of the following $k$-dimensional subvariety of $M$. For every $I\subseteq\{1, \ldots, n\}$ with $|I|=k$ consider the coordinate subspace $\mathbb R^I$ and choose arbitrarily one face $F_I$ of $Q^n$ parallel to $F_I$. Then $[\omega]^{n-k}$ is Poincar\'e dual to $V_k = \sum_I \pi^{-1}(F_I)$, which can be checked by the explicit description of the cohomology of $M$ as $\mathbb Z[c_1, \ldots, c_n] / (c_1^2 = \dots = c_n^2 = 0)$.

Now we observe that $Z$ must intersect the $k$-subvariety $V_k$ for any choice of $F_I$. This precisely means that, for some fixed $I$, $Z$ must intersect all possible faces $\pi^{-1}(F_I)$.
\end{proof}

There is another version of the Lebesgue theorem (something like the Hex lemma~\cite{gale1979}) with a simple toric proof:

\begin{theorem}
\label{lebesgue-axes}
Let the unit cube $Q^n$ be covered by a family of closed sets $\{X_i\}_{i=1}^n$. Then some connected component of $X_i$ intersects both the corresponding opposite facets $F_i^+$ and $F_i^-$.
\end{theorem}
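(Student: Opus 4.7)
The plan is to argue by contradiction, again using the toric moment map $\pi : M = (\mathbb CP^1)^n \to Q^n$ from the proof of Theorem~\ref{lebesgue}, but now exploiting the product structure of $M$: each factor contributes its own K\"ahler class $[\omega_i]$, and we will force each $Y_i = \pi^{-1}(X_i)$ to be inessential with respect to its corresponding class $[\omega_i]$ rather than all of them with respect to a single one.

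Suppose that for every $i$ no component of $X_i$ meets both $F_i^+$ and $F_i^-$. Since $X_i$ is compact and $X_i\cap F_i^+$, $X_i\cap F_i^-$ are disjoint closed subsets separated by the components of $X_i$, the standard separation theorem (in a compact Hausdorff space components coincide with quasi-components) produces a clopen partition $X_i = A_i\sqcup B_i$ such that $A_i$ misses $F_i^+$ and $B_i$ misses $F_i^-$. Let $[\omega_i]\in H^2(M;\mathbb R)$ be the K\"ahler class pulled back from the $i$-th factor of $M$; it is Poincar\'e dual to each of the linearly equivalent divisors $\pi^{-1}(F_i^+)$ and $\pi^{-1}(F_i^-)$, hence admits representatives supported in an arbitrarily small neighborhood of either one. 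Writing $Y_i = \pi^{-1}(A_i)\sqcup\pi^{-1}(B_i)$ as a disjoint union of clopen pieces, the representative concentrated near $\pi^{-1}(F_i^+)$ shows that $[\omega_i]$ vanishes on $\pi^{-1}(A_i)$, and the one near $\pi^{-1}(F_i^-)$ does the same on $\pi^{-1}(B_i)$, so $[\omega_i]$ restricts to zero on all of $Y_i$.

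I would then pass to open neighborhoods $U_i\supseteq Y_i$ still covering $M$ and still satisfying $[\omega_i]|_{U_i}=0$, pick primitives $\alpha_i\in\Omega^1(U_i)$ with $d\alpha_i=\omega_i$, and extend each to a global $\tilde\alpha_i$ on $M$ using a bump function equal to $1$ on a smaller open neighborhood $V_i\subset U_i$ of $Y_i$, with the $V_i$ still covering $M$. As in the remark following Lemma~\ref{covering-dim}, the product $(\omega_1-d\tilde\alpha_1)\wedge\dots\wedge(\omega_n-d\tilde\alpha_n)$ vanishes identically on $M$, because at every point some $V_i$ contains it and the corresponding factor vanishes there. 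Expanding and using $d^2=0$ would then yield $\omega_1\wedge\dots\wedge\omega_n = d\beta$ globally on $M$, contradicting $\int_M \omega_1\wedge\dots\wedge\omega_n>0$.

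The main obstacle is the separation step: ensuring that the failure of the conclusion really yields a clopen splitting $X_i = A_i\sqcup B_i$ with $A_i\cap F_i^+ = B_i\cap F_i^- = \emptyset$, so that the inessentiality of $Y_i$ with respect to $[\omega_i]$ can be read off from the disjoint union decomposition of cohomology. Once this is in place, the toric argument is essentially a rerun of Lemma~\ref{covering-dim} and its remark, with the single K\"ahler class used there replaced by the $n$ distinct K\"ahler classes coming from the factors of $(\mathbb CP^1)^n$.
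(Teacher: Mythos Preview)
Your proof is correct and follows essentially the same route as the paper: lift via $\pi:(\mathbb{CP}^1)^n\to Q^n$, show that each $Y_i=\pi^{-1}(X_i)$ is inessential with respect to the $i$-th factor's K\"ahler class $[\omega_i]$, and then run the Lusternik--Schnirelmann product argument to contradict $[\omega_1]\cdots[\omega_n]\neq 0$. Your worry about the separation step is unfounded: in a compact Hausdorff space, two disjoint closed sets no component of which meets both can be separated by a clopen set (a routine compactness argument from components${}={}$quasi-components), so the clopen splitting $X_i=A_i\sqcup B_i$ is available exactly as you describe.
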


\begin{proof}
Assume the contrary. Then for every component $C$ of $\pi^{-1}(X_i)$ the class $[\omega_i]$, Poincar\'e dual to the divisor $\pi^{-1}(F_i^+) \sim \pi^{-1}(F_i^-)$, is zero on $C$. Therefore $[\omega_i]$ is zero when restricted to the whole $\pi^{-1}(X_i)$.

Then the product $[\omega_1]\cdot \dots \cdot [\omega_n]$ must be zero on the union $M = \bigcup_{i=1}^n \pi^{-1}(X_i)$, by the Lusternik--Schnirelmann argument. But from the correspondence between the cohomology product and the intersection of divisors this product is Poincar\'e dual the intersection $\bigcap_{i=1}^n \pi^{-1}(F_i^-)$, consisting of the single point. This is a contradiction.
\end{proof}

\section{Further results and problems}

Again, using the toric approach it is possible to reprove the result of~\cite{kar2012} (proved, as we now understand, with a ``real toric'' argument):

\begin{theorem}[The topological central point theorem]
Let $m=(d+1)(r-1)$, let $\Delta^m$ be the $m$-dimensional simplex, and let $W$ be a $d$-dimensional metric space. Suppose $f:\Delta^m\to W$ is a continuous map. Then 
$$
\bigcap_{\substack{F\subset\Delta^m\\\dim F = d(r-1)}} f(F)\neq\emptyset,
$$
where the intersection is taken over all faces of dimension $d(r-1)$.
\end{theorem}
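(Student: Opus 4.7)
The plan is to argue by contradiction, following the template of the toric proof of Theorem~\ref{kkm}, but with the hyperplane class $[\omega]$ replaced throughout by its $(r-1)$-st power. Suppose, to the contrary, that $\bigcap_F f(F)=\emptyset$, where $F$ ranges over all faces of $\Delta^m$ of dimension $d(r-1)$. Since $W$ is metric (hence Hausdorff) and each $f(F)$ is compact, the sets $W\setminus f(F)$ form a finite open cover of $W$. Using the hypothesis $\dim W\le d$, I would refine this to an open cover of $W$ of multiplicity at most $d+1$; pulling it back via $f$ produces a finite open cover $\{U_\alpha\}$ of $\Delta^m$ of multiplicity at most $d+1$, in which every $U_\alpha$ is disjoint from some $d(r-1)$-face $F_\alpha$.

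Next I would pass to the moment map $\pi:\mathbb CP^m\to\Delta^m$ and set $Y_\alpha=\pi^{-1}(U_\alpha)$. Each $Y_\alpha$ lies in the complement of the coordinate subvariety $\pi^{-1}(F_\alpha)\cong \mathbb CP^{d(r-1)}$. Choosing homogeneous coordinates so that $\pi^{-1}(F_\alpha)=\{z_{d(r-1)+1}=\dots=z_m=0\}$, the map $[z_0:\dots:z_m]\mapsto[z_{d(r-1)+1}:\dots:z_m]$ realises this complement as an affine $\mathbb C^{d(r-1)+1}$-bundle over the complementary $\mathbb CP^{\,m-d(r-1)-1}=\mathbb CP^{r-2}$. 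In particular the complement is homotopy equivalent to $\mathbb CP^{r-2}$, whose cohomology vanishes above degree $2(r-2)$, so $[\omega]^{r-1}$ restricts to zero on every $Y_\alpha$.

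The last step is the Lusternik--Schnirelmann / Palais coloring argument used in Lemma~\ref{covering-dim}, applied now to $[\omega]^{r-1}$ instead of $[\omega]$. I would invoke Lemma~\ref{coloring} with $k=d+1$ to refine $\{Y_\alpha\}$ to a $(d+1)$-colored open cover $\bigcup_{i=1}^{d+1}\mathcal Y_i$; since each color class $\mathcal Y_i$ consists of pairwise disjoint sets, its union $U_i$ still satisfies $[\omega]^{r-1}|_{U_i}=0$, and cup-product then gives
$$
[\omega]^{(r-1)(d+1)}\big|_{\bigcup_i U_i}=0.
$$
But $\bigcup_i U_i=\mathbb CP^m$ and $(r-1)(d+1)=m$, which contradicts $[\omega]^m\neq 0$ on $\mathbb CP^m$.

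The main delicate point is the middle step: one has to verify that the complement of a coordinate $\mathbb CP^{d(r-1)}$ inside $\mathbb CP^m$ is homotopy equivalent to $\mathbb CP^{r-2}$ so that $[\omega]^{r-1}$ dies on it. Once that is in place, the rest is a mechanical extension of Lemma~\ref{covering-dim} to a higher power of the K\"ahler class, exactly parallel to how Theorem~\ref{kkm} was upgraded from the classical KKM statement.
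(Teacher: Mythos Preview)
Your argument is correct and follows the same toric route as the paper's sketch: lift a multiplicity-$(d+1)$ cover of $W$ to $\mathbb{CP}^m$ via the moment map, observe that $[\omega]^{r-1}$ vanishes on each piece because it misses a coordinate $\mathbb{CP}^{d(r-1)}$, and finish with the Lusternik--Schnirelmann product argument. The only organizational difference is that the paper takes an \emph{arbitrary} fine cover of $W$ and then appeals to a compactness argument to pin down the common point, whereas you start directly from the cover $\{W\setminus f(F)\}$ and argue by contradiction, which neatly avoids that final limiting step.
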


\begin{proof}[Sketch of the proof]
For a fine enough open covering $\{U_i\}$ of $W$ of multiplicity at most $d+1$ we consider the corresponding covering of $\Delta^m$ by $X_i = f^{-1}(U_i)$, and the covering of $\mathbb CP^m$ by $Y_i = \pi^{-1}(X_i)$. Similar to the above proofs, from non-vanishing of $[\omega]^m$ over $\mathbb CP^m$ we conclude that for some $Y_i$ the class $[\omega]^{r-1}$ does not vanish over $Y_i$. Hence the corresponding $X_i$ must intersect all faces of $\Delta^m$ of appropriate dimension. Then a certain compactness argument establishes the same for a preimage of some $y\in W$.
\end{proof}

As another example we give a unified statement of the KKM and Lebesgue theorems:

\begin{theorem}
\label{kkm-lebesgue}
Let a simple polytope $P\subset \mathbb R^n$ be covered by a family of closed sets $X_i$ with covering multiplicity at most $n$. Then some $X_i$ touches at least $n+1$ facets of $P$.
\end{theorem}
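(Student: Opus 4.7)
The plan is to continue the toric-geometric strategy used in Theorems~\ref{lebesgue}, \ref{lebesgue-complement}, and~\ref{lebesgue-axes}. To the simple polytope $P$ associate its projective toric variety $M_P$ (with at worst orbifold singularities) equipped with the moment map $\pi : M_P \to P$ and the K\"ahler class $[\omega]$ satisfying $[\omega]^n \neq 0$. For each facet $F_j$, with outward normal $u_j$ and support number $c_j$, put $D_j = \pi^{-1}(F_j)$; then $[\omega]$ is Poincar\'e dual to $\sum_j c_j [D_j]$, subject to the principal-divisor relations $\sum_j \langle u_j, v\rangle [D_j] = 0$ valid for every $v \in \mathbb R^n$.

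Arguing by contradiction, suppose every $X_i$ touches at most $n$ facets of $P$; write $S_i = \{j : X_i\cap F_j\neq\emptyset\}$ and $Y_i = \pi^{-1}(X_i)$. It suffices to prove that each $Y_i$ is inessential, since Lemma~\ref{covering-dim} with $k = n$ will then force $[\omega]^n = 0$ on $\bigcup_i Y_i = M_P$, contradicting $\int_{M_P}\omega^n = n!\vol(P) > 0$.

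The mechanism for inessentiality is the following. Since $D_j$ is disjoint from $Y_i$ whenever $j\notin S_i$, we have $[D_j]|_{Y_i} = 0$ for such $j$, and the principal-divisor relations restrict to
\[
\sum_{j\in S_i}\langle u_j,v\rangle\,[D_j]|_{Y_i} = 0 \qquad\text{for every }v\in\mathbb R^n.
\]
If the affine hyperplanes $\{F_j\}_{j\in S_i}$ share a common point $v^*\in\mathbb R^n$, i.e.\ $\langle u_j,v^*\rangle = c_j$ for all $j\in S_i$, then substituting $v = v^*$ yields $[\omega]|_{Y_i}=\sum_{j\in S_i}c_j\,[D_j]|_{Y_i}=0$, the sought-for inessentiality.

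The main obstacle I anticipate is that $|S_i|\leq n$ alone does not ensure that these hyperplanes share a point, as thin slabs connecting a cube's opposite facets illustrate. To remove this obstruction I would first refine the covering by a vertex-indexed decomposition $P=\bigsqcup_v C_v$ of the simple polytope arranged so that each cell $C_v$ meets only the $n$ facets through $v$ (a dual-cell decomposition coming from the barycentric subdivision); a combination of passage to open neighbourhoods and Lemma~\ref{coloring} lets one keep the multiplicity of the refined covering of $M_P$ at most $n$. In every refined piece the touched facets then all contain the corresponding vertex $v$, which supplies the required common point, and the argument of the previous paragraph completes the proof.
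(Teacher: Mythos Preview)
Your setup and your diagnosis of the obstacle coincide with the paper's: one passes to the toric variety $M_P$ with moment map $\pi$, writes $[\omega]$ as the class of $\sum_j c_j D_j$ modulo the principal relations $\sum_j\langle u_j,v\rangle D_j\sim 0$, and observes that $Y_i=\pi^{-1}(X_i)$ is inessential exactly when the linear system $\langle u_j,v\rangle=c_j$ for $j\in S_i$ is solvable. You also correctly note that $|S_i|\le n$ alone does not force this.

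The gap is in your fix. If the cells $C_v$ are genuinely disjoint, they cannot all be closed (or all open) and still cover $P$; if instead they are closed and overlap on their common boundaries, then the refined family $\{X_i\cap C_v\}_{i,v}$ has multiplicity strictly greater than $n$ on those overlaps. Passing to open neighbourhoods only worsens this, and Lemma~\ref{coloring} cannot rescue the situation: it reorganises a multiplicity-$k$ cover into $k$ colour classes but never lowers the multiplicity. Nor can you argue piecewise, since knowing that each $\pi^{-1}(X_i\cap C_v)$ is inessential does not make their union $Y_i$ inessential. So the refinement step, as stated, does not go through.

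The paper removes the obstacle by a different and shorter device: since a simple polytope stays combinatorially (hence homeomorphically) the same under a small perturbation of its facet hyperplanes, one first perturbs $P$ so that \emph{any} $n$ facet normals $u_j$ become linearly independent. After this perturbation the system $\langle u_j,v\rangle=c_j$, $j\in S_i$, is automatically solvable whenever $|S_i|\le n$, which is precisely your common-point condition. Every $Y_i$ is then inessential outright, with no refinement needed, and Lemma~\ref{covering-dim} gives the contradiction.
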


\begin{proof}
For simple polytopes a moment map $\pi : M\to P$ can also be considered. In this general case $M$ may not be a smooth algebraic variety, but it is still an ``orbifold'', which means that it behaves like a smooth manifold with respect to the (co)homology with $\mathbb Q$ coefficients. This is sufficient to note that a cohomolgy class $[\omega]$ of an ample divisor on $P$ does not vanish in $n$-th power, and then apply the considerations from previous proofs. Put again $Y_i = \pi^{-1} (X)$. 

It is known that an ample divisor $H$ on $M$ can be obtained as $\pi^{-1}$ of a linear combination of facets $\sum c_F F$ with some positive coefficients. The divisors linearly equivalent to zero correspond to sums of the form $\sum a_F(v) \pi^{-1}(F)$, where the coefficients $a_F$ correspond to the respective  fluxes of a constant vector field $v$ through facets of $P$.

Now we observe that a simple polytope admits a perturbation of its facet hyperplanes and remains homeomorphic to itself under such perturbations. So we assume that no $n$ normals to facets of $P$ are linearly dependent. Therefore if any covering set $X_i$ touches at most $n$ facets $F_i, \ldots, F_m$ of $P$, then we can find a divisor $H'$ linearly equivalent to $H$ and having zero coefficients at these $F_i$'s. Indeed, it is possible to select a vector $v$ that has prescribed fluxes through these $n$ facets. So assuming the contrary, we obtain that all sets $Y_i$ do not intersect an appropriately chosen $H'$, are inessential, and cannot cover $M$ with multiplicity at most $n$.
\end{proof}

These inspiring connection between the toric geometry and the combinatorics of coverings and colorings raises several questions. Recall the Alexandrov width (waist) theorem (see~\cite[Theorem~6.2]{kar2012} and also~\cite{sit1958}): If a convex body $K\subset \mathbb R^n$ is covered by a family $\{X_i\}_{i=1}^N$ of closed sets with multiplicity at most $n$ then some $X_i$ cannot be put into a smaller homothetic copy of $K$. In fact, Theorem~\ref{kkm-lebesgue} is a particular case of this result. 

Trying to apply the technique of Theorem~\ref{kkm-lebesgue} to the Alexandrov width theorem, we encounter some problems. It would be nice if all subsets of $K$ that can be covered by a smaller copy of $K$ were inessential, but it is not true. For example, when $K$ is a polytope, an inessential set $X\subset K$ cannot contain an edge of $K$. This is because an edge $E\subset K$ corresponds to a curve $\pi^{-1}(E)$, and $\int_{\pi^{-1}(E)} \omega >0$. So if $K$ has an edge, which if not an \emph{affine diameter} (maximal linear section in given direction), then this edge can be covered by a smaller copy of $K$ and \emph{is} essential.

So it seems that the Alexandrov width theorem is unlikely to be proved in the same way as Theorems~\ref{kkm} and \ref{lebesgue}.

Another question is how to use the toric approach in the problems of estimating the maximal $(n-m)$-dimensional measures of subsets that cover the cube $Q^n$ with multiplicity at most $m$, similar to results in~\cite{kar2011cube,matd2011}.

\end{document}